\renewcommand{\epsilon}{\varepsilon}
\newtheorem{thm}{Theorem}[section]
\newtheorem{theorem}[thm]{Theorem}
\newtheorem{defn}[thm]{Definition}
\newtheorem{rmk}[thm]{Remark}
\newtheorem{lem}[thm]{Lemma}
\def\R{\mathbb{R}}
\def\C{\mathbb{C}}
\newtheorem{question}[thm]{Question}
\newcommand{\op}{\operatorname}
\def\R{\mathbb{R}}
\def\Z{\mathbb{Z}}
\def\C{\mathbb{C}}
\def\cal R{\mathcal R}
\title{Characterizing symplectic capacities on ellipsoids}
\author{Jean Gutt\thanks{jean.gutt@math.univ-toulouse.fr} \ and Vinicius G. B. Ramos\thanks{vgbramos@impa.br}}
\date{}
\begin{document}
\maketitle
\begin{abstract}
It is a long-standing conjecture that all symplectic capacities which are equal to the Gromov width for ellipsoids coincide on a class of convex domains in $\R^{2n}$. It is known that they coincide for monotone toric domains in all dimensions. In this paper, we study whether requiring a capacity to be equal to the $k^{th}$ Ekeland--Hofer capacity for all ellipsoids can characterize it on a class of domains. We prove that for $k=n=2$, this holds for convex toric domains, but not for all monotone toric domains. We also prove that for $k=n\ge 3$, this does not hold even for convex toric domains.
\end{abstract}
\section{Introduction}
Since Darboux's theorem it is known that all symplectic manifolds are locally ``the same''. In particular any symplectic invariant has to be of a global nature. This prompted the difficult quest in symplectic geometry of finding such invariants. The first nontrivial invariant was a so-called width defined by Gromov in \cite{gromov}. Inpired by Gromov's work, Ekeland and Hofer defined the concept of a symplectic capacity in \cite{EH}. If $X,X'$ are domains in $\R^{2n}$, a symplectic embedding from $X$ to $X'$ is a smooth embedding $\varphi:X \to X'$ such that $\varphi^*\omega=\omega$, where $\omega=\sum_{i=1}^n dx_i\wedge dy_i$ is the standard symplectic form on $\R^{2n}$. A \textbf{symplectic capacity} is a function $c$ which assigns to each subset in $\R^{2n}$ a number $c(X)\in[0,\infty]$ satisfying the following axioms:
\begin{description}
	\item{(Monotonicity)} If $X,X' \subset \R^{2n}$, and if there exists a symplectic embedding $X \hookrightarrow X'$, then $c(X) \le c(X')$.
	\item{(Conformality)} If $r$ is a positive real number then $c(rX) = r^2 c(X)$.	
\end{description}
Various examples of symplectic capacities have emerged such as the Hofer-Zehnder capacity $c_{\op{HZ}}$ defined in \cite{HZ} and the Viterbo capacity $c_{\op{SH}}$ defined in \cite{V}. There are also useful families of symplectic capacities parametrized by a positive integer $k$ including the Ekeland-Hofer capacities $c_k^{\op{EH}}$ defined in \cite{EH,EH2} using calculus of variations; the conjectured equal capacities $c_k^{\op{CH}}$ defined in \cite{GuH} using positive equivariant symplectic homology; and in the four-dimensional case, the ECH capacities $c_k^{\op{ECH}}$ defined in \cite{qech} using embedded contact homology. For more about symplectic capacities in general we refer to \cite{chls, schlenk} and the references therein. In view of all these different constructions and the desire for symplectic invariants to be defined axiomatically, a very natural question is the following.
\begin{question}\label{axiom}
	Is there an axiomatic characterization of some symplectic capacities?
\end{question}
One motivation for this question is a well-known conjecture which asserts that a certain normalization condition uniquely characterizes $c$ on the set of convex domains. This conjecture was recently disproved in the most general setting \cite{HO}, but it is still open for convex domains with $\Z/2$-symmetry. For $a_1,\dots,a_n\in(0,\infty]$, define the ellipsoid
\[E(a_1,\ldots,a_n):=\left\{z\in\C^n\;\big|\;\sum_{\substack{i=1\\ a_i\neq\infty}}^n\frac{\pi|z_i|^2}{a_i}\le 1\right\}.\]
A symplectic capacity $c$ is \textbf{ball normalized} if, for all $a_1\dots,a_n$, we have $c\big(E(a_1,\ldots,a_n)\big)=\min\{a_1,\ldots,a_n\}$. Notice that this condition is equivalent to the more usual one requiring that $c\big(B^{2n}(1)\big)=c\big(Z^{2n}(1)\big)=1$, where $B^{2n}(r)=E(r,\dots,r)$ and $Z^{2n}(r) = E(r,\infty,\dots,\infty)=B^2(r)\times \C^{n-1}$.  
This normalization does characterize a capacity on the set of monotone toric domains, as was proven in \cite{GHR, CGH}. This class is related but not the same as the class of convex sets. In particular, there are monotone toric domains which are not symplectomorphic to convex domains, see \cite{DGRZ}. Conversely, there exist convex domains not symplectomorphic to toric domains \cite{Hutzeta}. A natural step towards answering Question \ref{axiom} is the following question.
\begin{question}[Abbondandolo and Hutchings]\label{questionk}
	Does imposing the value of a symplectic capacity on all ellipsoids characterize it on a large family of domains?
\end{question}
\begin{rmk}
	The original question formulated by Hutchings is whether the axioms of \cite[Theorem 1.1]{GuH} uniquely characterize symplectic capacities for convex domains. These axioms are somewhat stronger than assuming that the capacities take certain values on ellipsoids. We will focus on Question \ref{questionk} in this paper.
\end{rmk}

The goal of this paper is to give some answers to Question \ref{questionk}. In order to state our results, we recall some definitions. Let $\mu:\R^{2n}\to[0,\infty)$ be the standard moment map, i.e., \[\mu(z_1,\dots,z_n)=(\pi|z_1|^2,\dots,\pi|z_n|^2).\] A toric domain is a set of the form $X_\Omega=\mu^{-1}(\Omega)$, where $\Omega\subset[0,\infty)^n$ is the closure of a non-empty relatively open set. Let $\partial_+\Omega=\partial \Omega\cap (0,\infty)^n$. We assume henceforth that $\partial_+\Omega$ is piecewise smooth.
\begin{defn}
A toric domain $X_\Omega$ is said to be:
\begin{itemize}
\item monotone if every outward-pointing normal vector\footnote{This definition includes domains for which $\partial_+\Omega$ is not smooth where the normal vector at a point is not uniquely defined, but where we can still define a normal cone.} $(v_1,\dots,v_n)$ to $\partial_+\Omega$ satisfies $v_i\ge 0$, for all $i$.
\item concave if $\Omega$ is compact and $\R_{\ge 0}^n\setminus\Omega$ is convex.
\item convex if \[\widehat{\Omega}:=\left\{(x_1,\ldots,x_n)\in\R^n \;\big|\; \left(|x_1|,\ldots,|x_n|\right)\in\Omega\right\}\] is compact and convex.
\end{itemize}
\end{defn}
\begin{rmk}
We note that every concave or convex toric domain is monotone. If $\Omega'\subset[0,\infty)^{n-1}$ is a monotone domain (i.e. $X_{\Omega'}$ is monotone) and $f:\Omega'\subset[0,\infty)^{n-1}\to\R$ is a smooth function such that $\frac{\partial f}{\partial x_i}\le 0$, for every $i$, then $X_{\op{Gr}(f)}$ is a monotone toric domain, where $\op{Gr(f)}$ is the region bounded above by the graph of $f$; $\op{Gr(f)}=\{(x,y)\in\Omega'\times[0,\infty)\,|\,y\leq f(x)\}$. Moreover, if $X_{\Omega'}$ and $f$ are simultaneously convex or concave, then $X_{\op{Gr}(f)}$ is concave or convex, respectively. We also observe that every monotone toric domain can be approximated (in the Hausdorff topology) by domains of the form $X_{\op{Gr}(f)}$. Finally we note that if $\partial_+\Omega$ is smooth and $X_\Omega$ is monotone, then $X_\Omega$ is dynamically convex, see \cite[Proposition 1.8]{GHR}.
\end{rmk}
 For $a_1,\dots,a_n>0$, let $N_k(a_1,\dots,a_n)$ denote the $k$-th smallest number in the multiset 
$\{m a_i\mid i=1,\dots,n \text{ and }m\in\Z_{>0}\}$. ``Multiset'' means that repetition of numbers is allowed. These correspond to the Ekeland--Hofer or the Gutt--Hutchings capacities of an ellipsoid:
\begin{equation}\label{eq:gh_ell}
c_k^{\rm EH}(E(a_1,\dots,a_n))=c_k^{\rm CH}(E(a_1,\dots,a_n))=N_k(a_1,\dots,a_n).
\end{equation}
\begin{defn}\label{def:kcap}
	A symplectic capacity $c$ is called {\bf k-normalized} if for every $a_1,\dots a_n>0$,
	\[
			c(E(a_1,\dots,a_n))=N_k(a_1,\dots,a_n).
	\]
\end{defn}
To the best of our knowledge, this definition first appeared in \cite{BBLM}, and in \cite{ABE}.
\begin{rmk} Note that $N_1(a_1,\dots,a_n)=\min(a_1,\dots,a_n)$. So being $1$-normalized is equivalent to being ball normalized and thus by \cite{GHR,CGH}, 1-normalized capacities coincide for all monotone toric domains.
\end{rmk}

    The ``max'' and ``min'' k-normalized capacities are defined as:
    \[
        c_k^{\rm max}(X):=\inf\{N_k(a_1,\dots,a_n)\mid  X \hookrightarrow E(a_1,\dots,a_n),\text{ for some }a_1,\dots,a_n>0\}.
    \]
    \[
        c_k^{\rm min}(X):=\sup\{N_k(a_1,\dots,a_n)\mid E(a_1,\dots,a_n)\hookrightarrow X, \text{ for some }a_1,\dots,a_n>0\}.
    \]
    In fact, it is easy to see that if $c$ is a $k$-normalized capacity and $X\subset \R^{2n}$,
    \[c_k^{\rm min}(X)\le c(X)\le c_k^{\rm max}(X).\]
It follows that all k-normalized capacities coincide for $X$ if, and only if, $c_k^{\rm min}(X)=c_k^{\rm max}(X)$. One could then ask whether $k$-normalized capacities coincide for any class of convex domains. In fact, it was recently proven by Abbondandolo, Benedetti and Edtmair that this does hold for $n$-normalized capacities in a neighborhood of the round ball.
\begin{theorem}[\cite{ABE}]
There exists a $C^2$-neighborhood of the ball $B^{2n}(1)$ in which all $n$-normalized capacities coincide.
\end{theorem}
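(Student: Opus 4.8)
The plan is to choose the neighborhood so that $c_n^{\rm min}(X)=c_n^{\rm max}(X)$ for every $X$ in it. By the sandwich inequality $c_n^{\rm min}(X)\le c(X)\le c_n^{\rm max}(X)$, valid for every $n$-normalized capacity $c$, this equality forces all $n$-normalized capacities to coincide on $X$. Applying the sandwich to $c=c_n^{\rm EH}$, which is $n$-normalized by \eqref{eq:gh_ell}, gives $c_n^{\rm min}(X)\le c_n^{\rm EH}(X)\le c_n^{\rm max}(X)$; so the target equality is equivalent to the two reverse estimates $c_n^{\rm max}(X)\le c_n^{\rm EH}(X)$ and $c_n^{\rm EH}(X)\le c_n^{\rm min}(X)$, and the common value is then automatically $c_n^{\rm EH}(X)$. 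Concretely, I must approximate $X$ by ellipsoids from outside and from inside while essentially preserving the $n$-th capacity: for every $\epsilon>0$ I would produce an inscribed ellipsoid $E(a)\hookrightarrow X$ with $N_n(a)\ge c_n^{\rm EH}(X)-\epsilon$ and a circumscribed ellipsoid $X\hookrightarrow E(a')$ with $N_n(a')\le c_n^{\rm EH}(X)+\epsilon$.

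The point where $k=n$ enters is a simplification near the ball. Since $X$ is $C^2$-close to $B^{2n}(1)$, we have $B^{2n}(1-\delta)\subset X\subset B^{2n}(1+\delta)$ for some small $\delta$, and by monotonicity $1-\delta\le c_n^{\rm EH}(X)\le 1+\delta$. Hence it suffices to look for inscribed and circumscribed ellipsoids whose parameters $a_i$ are all close to $1$; for such $a$ the smallest $n$ elements of the multiset $\{m a_i\}$ are exactly $a_1,\dots,a_n$ (the next one being near $2$), so that $N_n(a)=\max_i a_i$. I do not need to understand all near-optimal ellipsoids: the two tasks reduce to exhibiting a single inscribed ellipsoid $E(a)\hookrightarrow X$ with $\max_i a_i\ge c_n^{\rm EH}(X)-\epsilon$ and a single circumscribed ellipsoid $X\hookrightarrow E(a')$ with $\max_i a_i'\le c_n^{\rm EH}(X)+\epsilon$.

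To build these ellipsoids I would pass to the boundary dynamics. For a smooth convex $X$ the value $c_n^{\rm EH}(X)$ is the action of a closed characteristic on $\partial X$, and on $\partial B^{2n}(1)$ the characteristic foliation is the Hopf flow, all of whose orbits are closed with the same action --- the Zoll situation. For $X$ that is $C^2$-close to the ball this flow is a $C^1$-small perturbation of the Hopf flow; averaging along the almost-periodic flow (a Moser/normal-form argument) should make $\partial X$ symplectically equivalent to the boundary of a toric domain that is $C^2$-close to the ball, on which inscribing and circumscribing by nearly round ellipsoids and computing $N_n=\max_i a_i$ are explicit, and on which the two estimates can be read off with an error that is $o(1)$ as $X\to B^{2n}(1)$.

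The main obstacle is exactly this last step: controlling the $n$-th capacity through the near-Hopf normalization uniformly over a full $C^2$-neighborhood, rather than only along smooth one-parameter families emanating from the ball. This is a local rigidity phenomenon for the Zoll contact form on $S^{2n-1}$, of the same nature as the local systolic and diastolic inequalities, and I expect the $C^2$ topology to be precisely what controls the perturbed characteristic flow well enough for the averaging to close and for the inscribed and circumscribed values of $\max_i a_i$ to be squeezed together around $c_n^{\rm EH}(X)$.
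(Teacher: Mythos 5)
This statement is quoted by the paper from \cite{ABE} and is not proved in the paper at all, so there is no internal argument to compare against; your proposal has to stand on its own, and it does not. Your reduction is sound as far as it goes: by the sandwich $c_n^{\rm min}\le c\le c_n^{\rm max}$ it suffices to prove $c_n^{\rm min}(X)=c_n^{\rm max}(X)$, and your observation that $N_n(a_1,\dots,a_n)=\max_i a_i$ whenever $2\min_i a_i>\max_i a_i$ (hence for nearly round ellipsoids) is correct and is exactly the reason the case $k=n$ is special. But everything after that is a plan, not a proof. The entire content of the theorem is the existence, for $X$ $C^2$-close to the ball, of an inscribed ellipsoid $E(a)\hookrightarrow X$ with $\max_i a_i\ge c_n^{\rm EH}(X)-\epsilon$ and a circumscribed one $X\hookrightarrow E(a')$ with $\max_i a_i'\le c_n^{\rm EH}(X)+\epsilon$, and you explicitly defer this to an ``averaging/normal form'' step that you yourself flag as the main obstacle. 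Identifying where the difficulty lies is not the same as resolving it.

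Worse, the specific mechanism you propose for closing the gap is false. You want the averaging argument to make $\partial X$ ``symplectically equivalent to the boundary of a toric domain that is $C^2$-close to the ball,'' where the inscribed and circumscribed ellipsoids could be read off combinatorially. A generic $C^2$-small perturbation of $B^{2n}(1)$ is not symplectomorphic to a toric domain: the paper itself cites \cite{Hutzeta} for the existence of convex domains (arbitrarily close to the ball) that are not symplectomorphic to any toric domain, and Birkhoff-type averaging only conjugates the characteristic flow to an integrable one up to a controlled error, not exactly. So the step ``reduce to the toric case'' cannot be carried out as stated, and the quantitative squeeze of $\max_i a_i$ around $c_n^{\rm EH}(X)$ — which is a genuine local rigidity statement of systolic/diastolic type, proved in \cite{ABE} by different and substantially harder means — is left entirely unestablished. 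The proposal is therefore an outline with a correct first reduction and a gap precisely at the theorem's core.
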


Our first result is that this holds in dimension 4 for all convex toric domains, but not for all monotone domains.
\begin{theorem}\label{thm:dim2}
If $X\subset \R^4$ is a convex toric domain, then $c_2^{\rm min}(X)=c_2^{\rm max}(X)$. Moreover, there exists a concave toric domain $V\subset \R^4$ such that $c_2^{\rm min}(V)< c_2^{\rm max}(V)$.
\end{theorem}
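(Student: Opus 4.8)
The plan is to sandwich both quantities between the Gutt--Hutchings capacity $c_2^{\mathrm{CH}}$ and the second ECH capacity $c_2^{\mathrm{ECH}}$, both of which are $2$-normalized in $\R^4$ (indeed $c_2^{\mathrm{ECH}}(E(a,b))=\min(2a,b)=N_2(a,b)$), so that $c_2^{\mathrm{min}}(X)\le c_2^{\mathrm{CH}}(X),\,c_2^{\mathrm{ECH}}(X)\le c_2^{\mathrm{max}}(X)$ for every $X$. The theorem then reduces to two embedding computations in dimension four, where embeddings of and into ellipsoids are controlled by ECH capacities.

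For the convex case I would first record the Gutt--Hutchings formula $c_2^{\mathrm{CH}}(X_\Omega)=\min\{\lVert v\rVert_\Omega: v\in\Z_{\ge 0}^2,\ v_1+v_2=2\}=\min(2A,2B,\Sigma)$, where $\lVert v\rVert_\Omega:=\max_{w\in\Omega}\langle v,w\rangle$, the numbers $A,B$ are the intercepts of $\partial_+\Omega$, and $\Sigma=\max_{\Omega}(x+y)$. It then suffices to prove $c_2^{\mathrm{max}}(X)\le\min(2A,2B,\Sigma)\le c_2^{\mathrm{min}}(X)$. For the lower bound I would inscribe ellipsoids: since $X$ is convex and ellipsoids are concave, Cristofaro--Gardiner's theorem makes the list of ECH capacities a sharp obstruction, so $E(a,b)\hookrightarrow X$ exactly when $c_j^{\mathrm{ECH}}(E(a,b))\le c_j^{\mathrm{ECH}}(X)$ for all $j$, and I would check that this system admits ellipsoids with $N_2(a,b)=c_2^{\mathrm{ECH}}(E(a,b))$ arbitrarily close to $\min(2A,2B,\Sigma)$. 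For the upper bound I would produce circumscribing ellipsoids $X\hookrightarrow E(a,b)$ with $N_2(a,b)\to\min(2A,2B,\Sigma)$: when the bound is attained through $\Sigma$ the inclusion $X\subseteq B^4(\Sigma)$ already works, but the extremal case (for instance a polydisc $B^2(1)\times B^2(b)$, whose optimal circumscribing family is $E(1+\epsilon,b')$ with $\epsilon\to 0^+$ and $b'\to\infty$) genuinely requires constructing nontrivial symplectic embeddings of the convex toric domain into an elongated ellipsoid.

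For the second assertion I would exhibit a concave toric domain whose volume is large compared with its small capacities. Concretely, take $V=X_\Omega$ with $\Omega$ the (non-convex) quadrilateral $(0,0),(L,0),(\delta,\delta),(0,L)$, say $\delta=1,\ L=10$; the arc $(L,0)$--$(\delta,\delta)$--$(0,L)$ is convex, so $\R_{\ge 0}^2\setminus\Omega$ is convex and $V$ is concave. Its weight decomposition is one ball $B(2\delta)$ in the corner at the origin together with two copies of $E(\delta,L-2\delta)$ in the leftover corners, whence $c_1^{\mathrm{ECH}}(V)=2\delta$ and $c_2^{\mathrm{ECH}}(V)=2\delta+\delta=3\delta$, so $c_2^{\mathrm{min}}(V)\le 3\delta$. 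On the other hand any $E(p,q)\supseteq V$ satisfies $\min(p,q)\ge c_1^{\mathrm{ECH}}(V)=2\delta$ and $pq/2=\mathrm{vol}(E)\ge\mathrm{vol}(V)=L\delta$, so $pq\ge 2L\delta\ge 16\delta^2$; if $N_2(p,q)=\min\bigl(2\min(p,q),\max(p,q)\bigr)$ were $<4\delta$ then, as $2\min(p,q)\ge 4\delta$, we would need $\max(p,q)<4\delta$, whence $pq\le\max(p,q)^2<16\delta^2$, a contradiction. Hence $c_2^{\mathrm{max}}(V)\ge 4\delta>3\delta\ge c_2^{\mathrm{min}}(V)$.

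The main obstacle is concentrated in the convex case. The reduction and the lower bound on $c_2^{\mathrm{min}}$ are clean consequences of the sharp ECH embedding criterion, but the matching upper bound on $c_2^{\mathrm{max}}$ requires embedding a convex toric domain into an ellipsoid, a problem not covered by the standard four-dimensional sharpness results; constructing these circumscribing embeddings, and verifying that the inscribed and circumscribed extremal values both equal $\min(2A,2B,\Sigma)$, is where the real work lies. By contrast the concave counterexample is essentially elementary once the right domain is found, since it needs only mon
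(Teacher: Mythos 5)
Your overall framework for the convex case (both $c_2^{\mathrm{CH}}$ and $c_2^{\mathrm{ECH}}$ are $2$-normalized, and the common value should be $\min(2A,2B,\Sigma)$ where $A,B$ are the axis intercepts and $\Sigma=\max_\Omega(x+y)$) is correct, but both halves of that computation are left as declarations of intent, and that is where the actual content of the theorem lives. For the lower bound you say you ``would check'' that the Cristofaro-Gardiner criterion $c_j^{\mathrm{ECH}}(E(a,b))\le c_j^{\mathrm{ECH}}(X_\Omega)$ for all $j$ admits solutions with $N_2(a,b)$ close to $\min(2A,2B,\Sigma)$; this requires controlling the entire ECH spectrum of an arbitrary convex toric domain and is not carried out. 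The paper avoids this by inscribing the quadrilateral $Q=\mathrm{conv}\{(0,0),(A,0),(x,y),(0,B)\}$ with $x+y=\Sigma$ inside $\Omega$ and quoting an explicit embedding result (Gutt--Usher, Proposition 3.5) to get $E(A,\Sigma)\hookrightarrow X_Q\subset X_\Omega$, whence $c_2^{\mathrm{min}}\ge N_2(A,\Sigma)=\min(2A,\Sigma)$ directly. More seriously, for the upper bound you assert that circumscribing a convex toric domain by an elongated ellipsoid ``genuinely requires constructing nontrivial symplectic embeddings'' and you leave this step open; in fact no embedding construction is needed, because for every $\epsilon>0$ one has the literal set inclusion $X_\Omega\subset P(A,B)\subset E\bigl(A+\epsilon,\tfrac{(A+\epsilon)B}{\epsilon}\bigr)$ (since $\tfrac{\pi|z_1|^2}{A+\epsilon}+\tfrac{\epsilon\,\pi|z_2|^2}{(A+\epsilon)B}\le\tfrac{A}{A+\epsilon}+\tfrac{\epsilon}{A+\epsilon}=1$ on $P(A,B)$), giving $c_2^{\mathrm{max}}(X_\Omega)\le 2(A+\epsilon)$, while $X_\Omega\subset B^4(\Sigma)$ gives $c_2^{\mathrm{max}}(X_\Omega)\le\Sigma$. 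So the convex half of the statement is not actually proved in your write-up; the missing steps are fillable, but one of them is filled by an elementary inclusion you did not find and the other by a citation rather than by the ECH-sharpness computation you propose.

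The concave counterexample, by contrast, is complete and is essentially the paper's own example up to rescaling: your $\Omega$ with vertices $(0,0),(L,0),(\delta,\delta),(0,L)$ and $\delta/L=1/10$ is the paper's $V_\epsilon$ with $\epsilon=1/10<2/9$. Your computation $c_2^{\mathrm{ECH}}(V)=w_1+w_2=3\delta$ from the weight decomposition matches the paper's use of $c_2^{\mathrm{CH}}(V_\epsilon)=3\epsilon$, and your lower bound on $c_2^{\mathrm{max}}$ via non-squeezing ($\min(p,q)\ge 2\delta$) plus the volume constraint $pq\ge 2L\delta$ is the same argument the paper runs. This part is correct as written.
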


Our second result says that $k=2$ is unusual. Let $P(a_1,\dots,a_n)$ be the polydisk defined by
\[P(a_1,\dots,a_n)=\left\{z\in\C^n\mid \pi|z_i|^2\le a_i\right\}.\]
\begin{theorem}\label{thm:hdim}
	For all  $a_1,\dots,a_n>0$, \[c_2^{\rm min}(P(a_1,\dots,a_n))=c_2^{\rm max}(P(a_1,\dots,a_n)).\]
	If $k\ge \max(n,3)$ and $P(1,\dots,1)\subset \C^n$, then \[c_k^{\rm min}(P(1,\dots,1))<c_k^{\rm max}(P(1,\dots,1)).\]
\end{theorem}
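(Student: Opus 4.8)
The plan is to estimate \(c_k^{\rm min}\) and \(c_k^{\rm max}\) separately, using throughout the elementary inequality \(c_k^{\rm min}(X)\le c_k^{\rm max}(X)\) together with the fact that, since \(c_k^{\rm CH}\) is a \(k\)-normalized capacity by \eqref{eq:gh_ell}, one has \(c_k^{\rm min}(X)\le c_k^{\rm CH}(X)\le c_k^{\rm max}(X)\) for every \(X\). For the polydisk the combinatorial formula of \cite{GuH} for convex toric domains gives \(c_k^{\rm CH}(P(a_1,\dots,a_n))=k\min_i a_i\). After reordering I assume \(a_1=\min_i a_i\).

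The upper bound on \(c_k^{\rm max}\) is in both parts just a set inclusion: for \(b\) large one has \(P(a_1,\dots,a_n)\subset E(a_1+\epsilon,b,\dots,b)\), because on the polydisk \(\tfrac{\pi|z_1|^2}{a_1+\epsilon}+\sum_{i\ge 2}\tfrac{\pi|z_i|^2}{b}\le \tfrac{a_1}{a_1+\epsilon}+\sum_{i\ge2}\tfrac{a_i}{b}\le 1\). Since \(N_k(a_1+\epsilon,b,\dots,b)=k(a_1+\epsilon)\) once \(b>k(a_1+\epsilon)\), letting \(b\to\infty\) and \(\epsilon\to 0\) gives \(c_k^{\rm max}(P(a_1,\dots,a_n))\le k\,a_1\). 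In particular \(c_2^{\rm max}(P)\le 2a_1\), and combined with \(c_k^{\rm max}\ge c_k^{\rm CH}\) this already yields \(c_k^{\rm max}(P(1,\dots,1))=k\) for all \(k\).

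For the first statement it remains to prove \(c_2^{\rm min}(P(a_1,\dots,a_n))\ge 2a_1\). Using \(P(a_1,\dots,a_1)\subset P(a_1,\dots,a_n)\), monotonicity, and conformality, this reduces to the symplectic embedding \(E(1,2,\dots,2)\hookrightarrow P(1,\dots,1)\) (realized up to an arbitrarily small shrinking); since \(N_2(1,2,\dots,2)=2\), it gives \(c_2^{\rm min}(P(1,\dots,1))\ge 2\) and hence \(c_2^{\rm min}(P)\ge 2a_1\). In dimension four this is the classical volume-filling embedding \(E(1,2)\hookrightarrow P(1,1)\), certified by the equality of all ECH capacities. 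I expect the analogue in dimension \(2n\ge 6\) to be the crux of the theorem: ECH obstructions are unavailable, and the embedding is genuinely non-split—no product or suspension construction succeeds, because each of the \(n-1\) semiaxes of length \(2\) must be folded into a polydisk factor of width \(1\)—so it must be built by an explicit multiple-folding or ball-packing construction generalizing the four-dimensional picture. This is the step I expect to be the main obstacle.

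For the second statement I would instead show that the min bound is strictly deficient, so that \(c_k^{\rm min}(P(1,\dots,1))<k\le c_k^{\rm max}(P(1,\dots,1))\). Suppose \(E(c_1,\dots,c_n)\hookrightarrow P(1,\dots,1)\); monotonicity of the Gromov width gives \(\min_i c_i\le 1\), and the volume inequality gives \(\prod_i c_i\le n!\). If \(N_k(c_1,\dots,c_n)\ge k\), then at most \(k-1\) elements of the multiset \(\{m c_i\}\) lie below \(k\); but the axis attaining \(\min_i c_i\le 1\) already supplies the \(k-1\) values \(c_i,2c_i,\dots,(k-1)c_i<k\), so equality forces that axis to equal \(1\) and every other axis to satisfy \(c_j\ge k\), whence \(\prod_i c_i\ge k^{\,n-1}\). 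Because \(k\ge\max(n,3)\) implies \(k^{\,n-1}>n!\) (for \(n=2\) this is \(k\ge 3\); for \(n\ge 3\) one has \(k^{\,n-1}\ge n^{\,n-1}>n!\)), this contradicts the volume bound. A compactness argument over the closed region \(\{\min_i c_i\le 1,\ \prod_i c_i\le n!\}\), on which \(N_k\) attains its maximum and cannot equal \(k\), upgrades this to a uniform gap and yields \(c_k^{\rm min}(P(1,\dots,1))<k\).
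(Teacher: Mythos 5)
Your reduction of the first statement to the embedding $\op{int}E(1,2,\dots,2)\hookrightarrow P(1,\dots,1)$ in $\C^n$ is exactly the paper's reduction, and your argument for the second statement (the Gromov width forces $\min_i c_i\le 1$, counting multiset elements below $k$ forces the remaining axes up to $k$, and $k^{n-1}>n!$ contradicts the volume bound) is essentially the paper's proof, which phrases the same counting as a two-case analysis on whether $a_2\le (k-1)a_1$. One repair there: your ``compactness'' step is not quite right as stated, since the region $\{\min_i c_i\le 1,\ \prod_i c_i\le n!\}$ is unbounded and $N_k$ need not attain its supremum on it; the clean fix, which is what the paper does, is to run the whole argument quantitatively with $N_k\ge k-\epsilon$, deduce $a_1\ge (k-\epsilon)/k$ and $a_i\ge k-\epsilon$ for $i\ge 2$, and let $\epsilon\to 0$ in the volume inequality.

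The genuine gap is the one you flagged yourself: you do not construct the embedding $E(1,2,\dots,2)\hookrightarrow P(1,\dots,1)$ for $n\ge 3$, and without it the first statement is unproved in dimensions $\ge 6$. Moreover, your heuristic that ``no product or suspension construction succeeds'' points in the wrong direction: the paper obtains the embedding precisely by a fibered, suspension-type construction, namely the family method of \cite{BH} applied twice together with induction on $n$. One regards $E_n(1,2,\dots,2)$ as fibered over its intersection with the $z_n$-plane, the fiber over a point of action $r$ being $(1-\tfrac{r}{2})E_{n-1}(1,2,\dots,2)$; the inductive hypothesis embeds this fiber into $(1-\tfrac{r}{2})P_{n-1}(1,\dots,1)\subset P_{n-1}(1,1-\tfrac{r}{2},\dots,1-\tfrac{r}{2})$, and the family method (after refibering over the $z_1$-plane) produces $E_n(1,2,\dots,2)\hookrightarrow D(1)\times E_{n-1}(1,2,\dots,2)$ and then, by a second application, an embedding into $D(1)\times P_{n-1}(1,\dots,1)=P_n(1,\dots,1)$; the base case $n=2$ is the Frenkel--M\"uller embedding you cited. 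So the missing step is not a new explicit folding or packing but an application of an existing parametric embedding technique; as written, however, your proof of the first statement is incomplete for $n\ge 3$.
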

It follows from Theorem \ref{thm:hdim} that $n$-normalized capacities cannot coincide for all convex sets if $n\ge 3$.

The rest of the paper is organized in three sections. In Section \ref{sec:conv2d} we prove the first part of Theorem \ref{thm:dim2}. In Section \ref{sec:example}, we construct a concave toric domain for which 2-normalized capacities do not coincide. Finally in Section \ref{sec:knorm}, we prove Theorem \ref{thm:hdim}.

\noindent{\bf Acknowledgements}
	We would like to thank Alberto Abbondandolo and Michael Hutchings for pointing us the question about $k$-normalized capacities. We are grateful to Alberto Abbondandolo, Oliver Edtmair and Gabriele Benedetti for sharing a preliminary version of their work \cite{ABE} with us. We would also like to particularly thank Richard Hind for his insight on embedding ellipsoids into polydisks in higher dimensions and the argument of the proof of the first part of Theorem \ref{thm:hdim}.
	
	J.G. was partially supported by the ANR LabEx CIMI (grant ANR-11-LABX-0040) within the French State Programme ``Investissements d’Avenir'' and by the ANR COSY (ANR-21-CE40-0002) grant.

V.G.B.R.\ is grateful for the hospitality of the Institute for Advanced Study, where part of this work was completed. V.G.B.R.\ was partially supported by NSF grant DMS-1926686, FAPERJ grant JCNE E-26/201.399/2021 and a Serrapilheira Institute grant.

\section{2-normalized capacities for 4-dimensional convex toric domains}\label{sec:conv2d}
In this section we prove the first part of Theorem \ref{thm:dim2}, namely that for a convex toric domain $X_\Omega\subset \R^4$, we have \begin{equation}\label{eq:conv2d}
c_2^{\rm min}(X_\Omega)=c_2^{\rm max}(X_\Omega).
\end{equation}

Before proving \eqref{eq:conv2d} in all generality, we prove the special case of a polydisk.
\begin{lem}\label{lem:polydisk}
For any 2-normalized symplectic capacity $c$, we have $c(P(a,b))=2\min(a,b)$.
\end{lem}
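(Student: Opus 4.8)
The plan is to sandwich $c$ via the inequality $c_2^{\rm min}(P(a,b))\le c(P(a,b))\le c_2^{\rm max}(P(a,b))$, which holds for every $2$-normalized capacity, and to show that both outer quantities equal $2\min(a,b)$. By the symplectomorphism exchanging the two coordinates we may assume $a\le b$, and by conformality we may rescale so that $a=1$ (note $r\cdot P(a,b)=P(r^2a,r^2b)$ and $N_2$ is homogeneous of degree one). Thus the goal reduces to showing $c(P(1,b))=2$ for every $b\ge 1$.

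For the upper bound I would use the inclusion $P(1,b)\subseteq Z^4(1)=E(1,\infty)$, which constrains only the first coordinate. Monotonicity then gives $c(P(1,b))\le c(E(1,\infty))$, and $2$-normalization applied to the cylinder (just as ball normalization is applied to $Z^{2n}$) yields $c(E(1,\infty))=N_2(1,\infty)=2$. Hence $c(P(1,b))\le 2$ for all $b$. If one prefers to use only finite ellipsoids, the same bound follows from a symplectic embedding $P(1,b)\hookrightarrow E(1+\epsilon,L)$ with $L$ large, since then $N_2(1+\epsilon,L)=2(1+\epsilon)\to 2$; such an embedding exists because $P(1,b)$ is a bounded subset of the cylinder and can be squeezed into a thin long ellipsoid.

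For the lower bound I would first reduce to the cube by monotonicity: since $P(1,1)\subseteq P(1,b)$ for $b\ge 1$, it suffices to prove $c(P(1,1))\ge 2$. As $c\ge c_2^{\rm min}$, this follows once I exhibit, for every $\epsilon>0$, a symplectic embedding $E(1,2-\epsilon)\hookrightarrow P(1,1)$: then $c(P(1,1))\ge N_2(1,2-\epsilon)=2-\epsilon$, and letting $\epsilon\to 0$ gives $c(P(1,1))\ge 2$. This embedding is an essentially volume-filling embedding of a thin ellipsoid into the cube, i.e. the sharp case of the ellipsoid-into-cube problem, provided by the results of Frenkel--Müller. (When $b\ge 2$ one can bypass this entirely via the inclusion $E(1,2)\subseteq P(1,b)$.)

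The routine parts are the symmetry and conformality reductions and the arithmetic of $N_2$. I expect the main obstacle to be the lower bound in the regime $1\le b<2$: here no thin ellipsoid with the required second capacity fits into $P(1,b)$ by inclusion, so one genuinely needs the optimal, essentially volume-filling embedding $E(1,2-\epsilon)\hookrightarrow P(1,1)$. This is the single step that relies on nontrivial symplectic embedding technology rather than on monotonicity and normalization alone.
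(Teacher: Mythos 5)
Your proposal is correct and follows essentially the same route as the paper: the lower bound comes from the Frenkel--Müller embedding of (the interior of) $E(1,2)$ into the cube $P(1,1)$, and the upper bound from the elementary inclusion of $P(a,b)$ into a long thin ellipsoid $E(a+\epsilon,L)$ with $N_2=2(a+\epsilon)$. The only differences are cosmetic normalizations (rescaling to $a=1$, phrasing the upper bound via the cylinder before falling back to finite ellipsoids), and your "squeezing" into $E(1+\epsilon,L)$ is in fact just an inclusion, exactly as in the paper.
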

\begin{proof}
 We can assume without loss of generality that $a\le b$. Let $\varepsilon>0$ such that $\varepsilon<b/2$. It follows from \cite[Theorem 1.3]{FM} that $\op{int}(E(a,2a))\hookrightarrow P(a,a)$. So
    \begin{equation}\label{eq:incl_pb1}
      \op{int}(E(a,2a))\hookrightarrow P(a,a)\subset P(a,b)\subset E\left(a+\epsilon,\frac{(a+\epsilon)b}{\epsilon}\right)
    \end{equation}
    The last inclusion above is a simple analytic geometry exercise.  
    If $c$ is any 2-normalized capacity, it follows from Definition \ref{def:kcap}, \eqref{eq:incl_pb1} and \eqref{eq:gh_ell} that
  \[2a=c\left(E(a,2a)\right)\le c(P(a,b))\le c\left(E\left(a+\epsilon,\frac{(a+\epsilon)b}{\epsilon}\right)\right)=2(a+\varepsilon).
\]
Taking the limit as $\varepsilon\to 0$, we conclude that $c(P(a,b))=2a$.
\end{proof}

Now, let $X_\Omega$ be a 4-dimensional convex toric domain whose moment map image intercepts with the axes are $a$ and $b$. We can assume without loss of generality that $a\le b$. Since $X_\Omega\subset P(a,b)$, it follows from Lemma \ref{lem:polydisk} that $c_2^{\rm max}(X_\Omega)\le 2a$. Now let $w$ be the minimum of $r>0$ such that $X_\Omega\subset B(r)$. Since $X_\Omega\subset B(w)$, it follows from Definition \ref{def:kcap} that $c_2^{\rm max}(X_\Omega)\le w$. Therefore
\begin{equation}\label{eq:c2max}
c_2^{\rm max}(X_\Omega)\le \min(2a,w).
\end{equation}
Now we observe that there exists a point $(x,y)\in\partial_+\Omega$ such that $x+y=w$. Let $Q$ denote the convex hull of $(0,0)$, $(a,0)$, $(x,y)$ and $(0,b)$. Since $\Omega$ is convex, $X_Q\subset X_\Omega$. It follows from \cite[Proposition 3.5]{GU} that $E(a,w)\hookrightarrow X_Q$ and so $E(a,w)\hookrightarrow X_\Omega$. Using Definition \ref{def:kcap} again, we obtain
\begin{equation}\label{eq:c2min}
c_2^{\rm min}(X_\Omega)\ge \min(2a,w).
\end{equation}
Combining \eqref{eq:c2max} and \eqref{eq:c2min}, we conclude that $c_2^{\rm min}(X_\Omega)=c_2^{\rm max}(X_\Omega)$.

\section{A family of concave toric domains}\label{sec:example}

We now prove the second statement in Theorem \ref{thm:dim2}. In fact, we will construct a family of concave toric domains $V_\epsilon$ such that $c_2^{\rm min}(V_\epsilon)<c_2^{\rm max}(V_\epsilon)$ for $\epsilon$ sufficiently small. For $\epsilon\in (0,1/2)$, let $V_\epsilon=\mu^{-1}(Q_\epsilon)$, where $Q_\epsilon$ is the quadrilateral with vertices $(0,0),\,(0,1),\,(\epsilon,\epsilon),\,(1,0)$, see Figure \ref{fig:veps}(a).

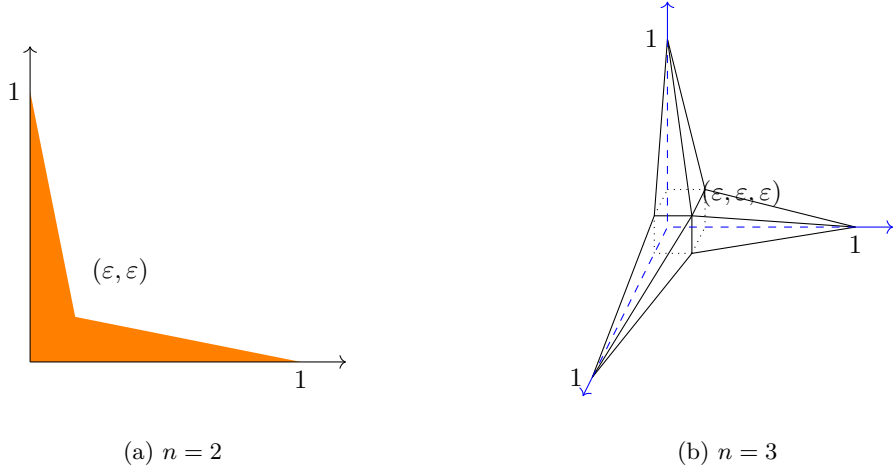
\begin{figure} 
\begin{subfigure}{0.5\textwidth}
    \begin{center}\begin{tikzpicture}[scale=1.2]
         \fill[orange] (0,0)--(3,0)--(0.5,0.5)--(0,3)--(0,0);
        \draw[->] (0,0)--(3.5,0);
        \draw[->] (0,0)--(0,3.5);
        \draw (1,1) node {$(\epsilon,\epsilon)$};
        \draw (3,0) node[below]{1};
        \draw (0,3) node[left]{1};
    \end{tikzpicture}
    \end{center}\subcaption{$n=2$}
    \end{subfigure}
    \begin{subfigure}{0.5\textwidth}
        \begin{center}\begin{tikzpicture}[scale=0.5]
\draw[->,color=blue] (5,0) -- (6,0);
\draw[->,color=blue] (0,5) -- (0,6);
\draw[->,color=blue] (-2,-4)--(-2.25,-4.5);
\draw (0,5)--(1,1)--(5,0);
\draw (5,0)--(0.65,-0.7)--(-2,-4);
\draw (-2,-4)--(-0.35,0.3)--(0,5);
\draw (0.65,-0.7)--(0.65,0.3);
\draw (0.65,0.3)--(1,1);
\draw (0.65,0.3)--(-0.35,0.3);
\draw (-2,-4)--(0.65,0.3);
\draw (0,5)--(0.65,0.3);
\draw (0.65,0.3)--(5,0);
\draw[dotted] (1,1)--(1,0);
\draw[dotted] (0.65,-0.7)--(1,0);
\draw[dotted] (0.65,-0.7)--(-0.35,-0.7);
\draw[dotted] (-0.35,-0.7)--(-0.35,0.3);
\draw[dotted] (-0.35,0.3)--(0,1);
\draw[dotted] (0,1)--(1,1);
\draw[dashed, color=blue] (-2,-4)--(0,0);
\draw[dashed, color=blue] (5,0)--(0,0);
\draw[dashed, color=blue] (0,5)--(0,0);
\draw (5,0) node[below]{1};
\draw (0,5) node[left]{1};
\draw (-2,-4) node[left]{1};
\draw (0.65,0.3) node[above right]{$(\epsilon,\epsilon,\epsilon)$};
\end{tikzpicture}
\end{center}\subcaption{$n=3$}
\end{subfigure}
    \caption{The domain $V_\epsilon$}\label{fig:veps}
 \end{figure}   
The following theorem is enough to prove the second statement in Theorem \ref{thm:dim2} although it proves quite a bit more. 
\begin{theorem}\label{thm:veps}
 Let $\epsilon\in(0,1/2)$. Then $c_2^{\rm min}(V_\epsilon)=c_2^{\rm max}(V_\epsilon)$ if, and only if $\epsilon\ge 2/9$.
\end{theorem}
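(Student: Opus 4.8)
The plan is to compute both $c_2^{\rm min}(V_\epsilon)$ and $c_2^{\rm max}(V_\epsilon)$ explicitly as functions of $\epsilon$ and find the threshold where they coincide. Since $V_\epsilon$ is a concave toric domain, I expect both quantities to be computable from the combinatorics of the quadrilateral $Q_\epsilon$ together with known ellipsoid-embedding results. First I would analyze $c_2^{\rm max}(V_\epsilon)$, which is governed by how efficiently $V_\epsilon$ embeds into ellipsoids. Because $Q_\epsilon$ is contained in the triangle with vertices $(0,0)$, $(1,0)$, $(0,1)$ (this triangle's preimage is the ball $B^4(1)=E(1,1)$), we have $V_\epsilon\subset B^4(1)$, giving $c_2^{\rm max}(V_\epsilon)\le N_2(1,1)=2$. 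One should check whether the ``pinch'' at $(\epsilon,\epsilon)$ actually lets $V_\epsilon$ embed into a smaller ellipsoid; since $(\epsilon,\epsilon)$ lies well inside the unit triangle for small $\epsilon$, the relevant obstruction likely comes from the diagonal line $x+y=1$ through the vertices $(1,0)$ and $(0,1)$, so I expect $c_2^{\rm max}(V_\epsilon)=2$ independent of small $\epsilon$, or more precisely determined by the outermost supporting data.

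Next I would compute $c_2^{\rm min}(V_\epsilon)$, which is governed by the largest ellipsoid $E(a_1,a_2)$ that embeds into $V_\epsilon$ with $N_2(a_1,a_2)$ maximized. For concave toric domains the optimal ellipsoid embeddings are understood: an ellipsoid $E(c,c)=B^4(c)$ fits inside $X_\Omega$ iff the triangle with legs $c$ sits inside $\Omega$, and more generally the embedding results of \cite{GU} (or the ECH/concave-domain technology) convert the problem into a lattice/combinatorial question about $Q_\epsilon$. The key point is that because of the concave indentation at $(\epsilon,\epsilon)$, the largest inscribed symmetric triangle has legs of length roughly $2\epsilon$ (bounded by the requirement that the point $(\epsilon,\epsilon)$ lie above the line $x+y=c$), so balls $B^4(c)$ only fit for $c\le 2\epsilon$, making the diagonal contribution weak. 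I would therefore expect $c_2^{\rm min}(V_\epsilon)$ to be realized by a genuinely eccentric ellipsoid $E(a_1,a_2)$ hugging one of the long edges from $(\epsilon,\epsilon)$ out to $(1,0)$ or $(0,1)$, and I would optimize $N_2(a_1,a_2)$ over the admissible family, producing an explicit function of $\epsilon$.

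The main obstacle I anticipate is establishing the \emph{sharp} value of $c_2^{\rm min}$, i.e. proving that no ellipsoid embeds into $V_\epsilon$ with a larger value of $N_2$ than the candidate I construct. Lower bounds on $c_2^{\rm min}$ come from exhibiting explicit embeddings via \cite{GU} or \cite{FM}, which is the easy direction; the hard direction is the matching upper bound, for which I would use an obstruction coming from a $2$-normalized capacity evaluated on $V_\epsilon$ — concretely, bound any embedded $E(a_1,a_2)$ by comparing $N_2(a_1,a_2)$ against the ECH or Gutt--Hutchings capacities of $V_\epsilon$, which for concave toric domains are computable from lattice-point counts in $Q_\epsilon$. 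Since $V_\epsilon$ is $4$-dimensional and concave, I expect the full ECH capacity formula for concave toric domains to pin down these obstructions exactly.

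Finally I would set the two explicit functions equal and solve. The statement asserts the crossover occurs precisely at $\epsilon=2/9$, so after computing $c_2^{\rm max}(V_\epsilon)$ (a constant, or simple in $\epsilon$) and $c_2^{\rm min}(V_\epsilon)$ (an increasing function of $\epsilon$ up to the point where the ball embedding takes over), the equation $c_2^{\rm min}(V_\epsilon)=c_2^{\rm max}(V_\epsilon)$ should reduce to a single algebraic condition whose solution is $\epsilon=2/9$. I would then verify monotonicity in $\epsilon$ to conclude that equality holds exactly on the range $\epsilon\ge 2/9$ and strict inequality $c_2^{\rm min}(V_\epsilon)<c_2^{\rm max}(V_\epsilon)$ holds for $\epsilon<2/9$, which simultaneously establishes the second assertion of Theorem \ref{thm:dim2}.
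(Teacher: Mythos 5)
Your overall plan misplaces where the difficulty of this theorem lies, and as a result it is missing the central construction. First, a small but telling numerical slip: $N_2(1,1)=1$, not $2$ (the multiset for $E(1,1)$ is $\{1,1,2,2,\dots\}$). More seriously, your expectation that $c_2^{\rm max}(V_\epsilon)$ is ``a constant, or simple in $\epsilon$, determined by the outermost supporting data'' is wrong, and it is precisely the point of the theorem that it is not. Since $c_2^{\rm CH}$ is itself a $2$-normalized capacity, one has $c_2^{\rm min}(V_\epsilon)\le c_2^{\rm CH}(V_\epsilon)=3\epsilon$ for small $\epsilon$ by the Gutt--Hutchings formula for concave toric domains; so to prove equality on $[2/9,1/3)$ you must show $c_2^{\rm max}(V_\epsilon)\le 3\epsilon$, i.e.\ you must produce a symplectic embedding of $V_\epsilon$ into an ellipsoid with $N_2$ equal to $3\epsilon$, far smaller than anything visible from toric inclusions. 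The paper does this by embedding $\op{int}(V_\epsilon)$ into $B^4(3\epsilon)$: it decomposes $V_\epsilon$ into its weight sequence $w_1=2\epsilon$, $w_2=w_3=\epsilon$, plus two pieces affinely equivalent to $E(\epsilon,1-3\epsilon)$, invokes Cristofaro-Gardiner's equivalence between embedding a concave toric domain into a ball and packing its weight balls, and then exhibits an explicit triangle packing into $T(3\epsilon)$ which fits exactly when $1-3\epsilon\le 6\epsilon-1$, i.e.\ $\epsilon\ge 2/9$. This packing is where the threshold $2/9$ comes from in the ``if'' direction; your proposal contains no mechanism that could produce it.

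Conversely, for the ``only if'' direction your emphasis on pinning down $c_2^{\rm min}$ sharply via ECH capacities is more machinery than needed: the bound $c_2^{\rm min}(V_\epsilon)\le c_2^{\rm CH}(V_\epsilon)=3\epsilon$ is automatic, and the paper separates $c_2^{\rm max}$ from $3\epsilon$ for $\epsilon<2/9$ by an elementary obstruction. If $V_\epsilon\hookrightarrow E(a,b)$ with $a\le b$, then $B^4(2\epsilon)\subset V_\epsilon$ forces $a\ge 2\epsilon$, and volume forces $b\ge\sqrt{2\epsilon}$, so $N_2(a,b)=\min(2a,b)\ge\min(4\epsilon,\sqrt{2\epsilon})>3\epsilon$ exactly because $\sqrt{2\epsilon}>3\epsilon$ when $\epsilon<2/9$ --- the second appearance of the threshold. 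Your instinct that the lower bound on $c_2^{\rm min}$ (explicit embeddings) is the easy direction is right, but the two genuinely hard steps --- the ball-packing embedding of $V_\epsilon$ into $B^4(3\epsilon)$ and the volume-plus-ball obstruction on the $c_2^{\rm max}$ side --- are absent from the proposal, so as written it could not be completed into a proof.
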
    
\begin{proof}
Let $\epsilon\in[2/9,1/2)$. We will show that $c_2^{\rm min}(V_\epsilon)=c_2^{\rm max}(V_\epsilon)$.

First suppose that $\epsilon\in[1/3,1/2)$. Then 
\begin{equation*}\label{eq:inc}
E(1/2,1)\subset V_\epsilon\subset B^4(1)=E(1,1).
\end{equation*}
Since $N_2(1/2,1)=N_2(1,1)=1$, it follows that
\begin{equation*}
c_2^{\rm max}(V_\epsilon)\le N_2(1,1)=N_2(1/2,1)\le c_2^{\rm min}(V_\epsilon).
\end{equation*}
Therefore $c_k^{\rm min}(V_\epsilon)=c_k^{\rm max}(V_\epsilon)$.

Before proceeding to the case when $\epsilon\in[2/9,1/3)$, we now recall how one can associate to a concave toric domain $X_\Omega$ a natural ball packing $\bigsqcup_{j=1}^\infty \op{int}(B^4(w_j))\hookrightarrow X_\Omega$. For a more thorough explanation, see \cite{concave}. Let $T(w)\subset \R_{\ge 0}^2$ be the triangle with vertices $(0,0)$, $(w,0)$ and $(0,w)$. So $\mu^{-1}(T(w))=B^4(w)$. Let $w_1$ the supremum of $w$ such that $T(w)\subset \Omega$. So $\Omega\setminus T(w_1)$ has of two connected components $\Omega_1$ and $\Omega_1'$. We translate $\Omega_1$ and $\Omega_1'$ so that the corners lie at the origin and we apply the linear transformations $\begin{bmatrix}1&1\\0&1\end{bmatrix}$ and $\begin{bmatrix}1&0\\1&1\end{bmatrix}$, respectively. By taking the closure of the sets above, we obtain two domains $\widetilde{\Omega}_1$ and $\widetilde{\Omega}_1'$ in $\R^2_{\ge 0}$. Now we define the next weights $w_2$ and $w_3$ to be the suprema of $w$ such that $T(w)\subset \widetilde{\Omega}_1$ and $T(w)\subset \widetilde{\Omega}_1'$, respectively, arranged in decreasing order. We continue this process by induction and we obtain a decreasing sequence $w_1,w_2,\dots$. It follows from the ``Traynor trick'' (\cite[Proposition 5.2]{traynor}) followed by a transformation in $SL(2,\Z)$, as explained in \cite[Lemma 1.8]{concave}, that 
\[\bigsqcup_{j=1}^\infty \op{int}(B(w_j))\hookrightarrow X_\Omega.\] In \cite{CG}, Cristofaro-Gardiner proved that \begin{equation}\label{eq:bp_emb}
\op{int}(X_\Omega)\hookrightarrow B(a)\iff \bigsqcup_{j=1}^\infty \op{int}(B^4(w_j))\hookrightarrow B^4(a).\end{equation}

Now suppose that $\epsilon\in [2/9,1/3)$. Let $w_1\ge w_2\ge w_3\ge \dots$ be the weights of $V_\epsilon$ as defined above. It follows from a simple calculation that $w_1=2\epsilon$ and $w_2=w_3=\epsilon$. The triangles $T(w_1)$, $T(w_2)$ and $T(w_3)$ correspond to the red and the two blue triangles in Figure \ref{fig:bp}(a), respectively. Moreover the domains $\Omega_2:=\op{cl}(\widetilde{\Omega}_1\setminus B^4(w_2)) $ and $\Omega_2':=\op{cl}(\widetilde{\Omega}'_1\setminus B^4(w_3))$ are triangles which are affinely equivalent under $SL(2,\Z)$ to the yellow triangles in Figure \ref{fig:bp}(a). It follows that $\Omega_2$ and $\Omega'_2$ are affinely equivalent to a right triangle $T(\epsilon,1-3\epsilon)$ with sides $\epsilon$ and $1-3\epsilon$. Note that $X_{T(\epsilon,1-3\epsilon)}=E(\epsilon,1-3\epsilon)$. It follows that \begin{equation}\label{eq:bp_oddeven}
\bigsqcup_{k=2}^\infty \op{int}(B^4(w_{2k}))\hookrightarrow \op{int}(E(\epsilon,1-3\epsilon)),\text{ and }\bigsqcup_{k=2}^\infty \op{int}(B^4(w_{2k+1}))\hookrightarrow \op{int}(E(\epsilon,1-3\epsilon)).
\end{equation}
Now we can find an explicit packing of the interiors of affine copies of $T(2\epsilon)$, two copies of $T(\epsilon)$ and two copies of $T(\epsilon,1-3\epsilon)$ into $T(3\epsilon)$, as shown in Figure \ref{fig:bp}(b). Here we use the fact that $1-3\epsilon<6\epsilon-1$, which is equivalent to $\epsilon\ge 2/9$. It follows from the ``Traynor trick'' that
\begin{equation}\label{eq:bp_all}
\op{int}(B^4(w_1))\sqcup \op{int}(B^4(w_2))\sqcup \op{int}(B^4(w_3))\sqcup \op{int}(X_{T(\epsilon)})\sqcup \op{int}(X_{T(\epsilon)})\hookrightarrow T(3\epsilon).
\end{equation}
Using \eqref{eq:bp_emb}, \eqref{eq:bp_oddeven} and \eqref{eq:bp_all}, we conclude that
\[V_\epsilon\hookrightarrow B^4(3\epsilon).\]
Moreover, since $\epsilon<1/3$, it follows that $E(3\epsilon/2,3\epsilon)\subset V_\epsilon$. Therefore
\[c_2^{\rm max}(V_\epsilon)\le N_2(3\epsilon,3\epsilon)=N_2(3\epsilon/2,3\epsilon)\le c_2^{\rm min}(V_\epsilon).\]
Hence $c_2^{\rm min}(V_\epsilon)=c_2^{\rm max}(V_\epsilon)$.

\begin{figure}
\centering
\begin{subfigure}{0.4\textwidth}
\begingroup%
  \makeatletter%
  \providecommand\color[2][]{%
    \errmessage{(Inkscape) Color is used for the text in Inkscape, but the package 'color.sty' is not loaded}%
    \renewcommand\color[2][]{}%
  }%
  \providecommand\transparent[1]{%
    \errmessage{(Inkscape) Transparency is used (non-zero) for the text in Inkscape, but the package 'transparent.sty' is not loaded}%
    \renewcommand\transparent[1]{}%
  }%
  \providecommand\rotatebox[2]{#2}%
  \newcommand*\fsize{\dimexpr\f@size pt\relax}%
  \newcommand*\lineheight[1]{\fontsize{\fsize}{#1\fsize}\selectfont}%
  \ifx\svgwidth\undefined%
    \setlength{\unitlength}{142.45516837bp}%
    \ifx\svgscale\undefined%
      \relax%
    \else%
      \setlength{\unitlength}{\unitlength * \real{\svgscale}}%
    \fi%
  \else%
    \setlength{\unitlength}{\svgwidth}%
  \fi%
  \global\let\svgwidth\undefined%
  \global\let\svgscale\undefined%
  \makeatother%
  \begin{picture}(1,1.06785874)%
    \lineheight{1}%
    \setlength\tabcolsep{0pt}%
    \put(0,0){\includegraphics[width=\unitlength,page=1]{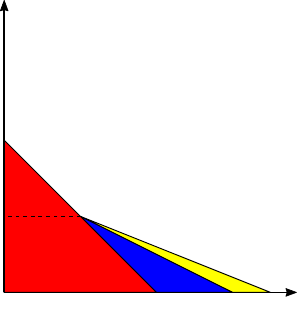}}%
    \put(0.22933725,0.01476997){\color[rgb]{0,0,0}\makebox(0,0)[lt]{\lineheight{1.25}\smash{\begin{tabular}[t]{l}$\epsilon$\end{tabular}}}}%
    \put(0.51493678,0.01312296){\color[rgb]{0,0,0}\makebox(0,0)[lt]{\lineheight{1.25}\smash{\begin{tabular}[t]{l}$2\epsilon$\end{tabular}}}}%
    \put(0.73588456,0.02102926){\color[rgb]{0,0,0}\makebox(0,0)[lt]{\lineheight{1.25}\smash{\begin{tabular}[t]{l}$3\epsilon$\end{tabular}}}}%
    \put(0,0){\includegraphics[width=\unitlength,page=2]{concave.pdf}}%
    \put(0.89080779,0.02041126){\color[rgb]{0,0,0}\makebox(0,0)[lt]{\lineheight{1.25}\smash{\begin{tabular}[t]{l}$1$\end{tabular}}}}%
  \end{picture}%
\endgroup%
\caption{}
\end{subfigure}\qquad
\begin{subfigure}{0.4\textwidth}
\begingroup%
  \makeatletter%
  \providecommand\color[2][]{%
    \errmessage{(Inkscape) Color is used for the text in Inkscape, but the package 'color.sty' is not loaded}%
    \renewcommand\color[2][]{}%
  }%
  \providecommand\transparent[1]{%
    \errmessage{(Inkscape) Transparency is used (non-zero) for the text in Inkscape, but the package 'transparent.sty' is not loaded}%
    \renewcommand\transparent[1]{}%
  }%
  \providecommand\rotatebox[2]{#2}%
  \newcommand*\fsize{\dimexpr\f@size pt\relax}%
  \newcommand*\lineheight[1]{\fontsize{\fsize}{#1\fsize}\selectfont}%
  \ifx\svgwidth\undefined%
    \setlength{\unitlength}{161.67181798bp}%
    \ifx\svgscale\undefined%
      \relax%
    \else%
      \setlength{\unitlength}{\unitlength * \real{\svgscale}}%
    \fi%
  \else%
    \setlength{\unitlength}{\svgwidth}%
  \fi%
  \global\let\svgwidth\undefined%
  \global\let\svgscale\undefined%
  \makeatother%
  \begin{picture}(1,0.91666572)%
    \lineheight{1}%
    \setlength\tabcolsep{0pt}%
    \put(0,0){\includegraphics[width=\unitlength,page=1]{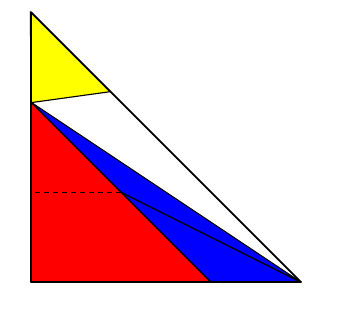}}%
    \put(0.31491171,0.01006533){\color[rgb]{0,0,0}\makebox(0,0)[lt]{\lineheight{1.25}\smash{\begin{tabular}[t]{l}$1-3\epsilon$\end{tabular}}}}%
    \put(0.63703137,0.01226854){\color[rgb]{0,0,0}\makebox(0,0)[lt]{\lineheight{1.25}\smash{\begin{tabular}[t]{l}$6\epsilon-1$\end{tabular}}}}%
    \put(0.00738979,0.33339787){\color[rgb]{0,0,0}\makebox(0,0)[lt]{\lineheight{1.25}\smash{\begin{tabular}[t]{l}$\epsilon$\end{tabular}}}}%
    \put(0.00344849,0.59979356){\color[rgb]{0,0,0}\makebox(0,0)[lt]{\lineheight{1.25}\smash{\begin{tabular}[t]{l}$2\epsilon$\end{tabular}}}}%
    \put(-0.00256602,0.86663371){\color[rgb]{0,0,0}\makebox(0,0)[lt]{\lineheight{1.25}\smash{\begin{tabular}[t]{l}$3\epsilon$\end{tabular}}}}%
    \put(0.84025407,0.02332821){\color[rgb]{0,0,0}\makebox(0,0)[lt]{\lineheight{1.25}\smash{\begin{tabular}[t]{l}$3\epsilon$\end{tabular}}}}%
    \put(0,0){\includegraphics[width=\unitlength,page=2]{ballpacking.pdf}}%
  \end{picture}%
\endgroup%
\caption{}
\end{subfigure}
\caption{Ball packings}\label{fig:bp}
\end{figure}

Now we suppose that $\epsilon<2/9$. Using the explicit formula for $c^{\op{CH}}_2$ for any concave toric domain \cite[Theorem 1.14]{GuH}, we have that
	\[
	    	c^{\op{CH}}_2(V_\epsilon)=3\epsilon.
	\]
	On the other hand, we claim that, $c_2^{\rm max}(V_\epsilon)>3\epsilon$.
	To prove the claim, it is enough to show that there exists $\delta>0$ such that for any ellipsoid $E(a,b)$ such that $V_\epsilon\hookrightarrow E(a,b)$ we have $N_2(a,b)\big)\ge  3\epsilon+\delta$. We assume, without loss of generality, that $a\leq b$.
    Since $B^4(2\epsilon)\subset V_\epsilon\hookrightarrow E(a,b)$, it follows that
    \[
        2\epsilon=N_1(2\epsilon,2\epsilon)\leq N_1(a,b)=a.
    \]
    We also have
    \[
        \epsilon=\op{Vol}(V_\epsilon)\leq \op{Vol}\big(E(a,b)\big)=\frac{ab}{2}\leq\frac{b^2}{2}.
    \]
    Thus $b\geq\sqrt{2\epsilon}>3\epsilon+\delta$, for some small $\delta>0$, since $\epsilon<2/9$. Moreover,
    \begin{equation*}\label{eq:c2gh}
        N_2(a,b)=\min\{2a,b\}\ge \min(4\epsilon, 3\epsilon+\delta)\ge 3\epsilon+\delta.
        \end{equation*}
        It follows that
        \[c_2^{\rm max}(V_\epsilon)\ge 3\epsilon+\delta=c_2^{\rm 
        CH}(V_\epsilon)+\delta>c_2^{\rm min}(V_\epsilon).\]
\end{proof}

\begin{rmk}
Using a similar argument to \cite[Theorem 1.2]{DGRZ}, it can be shown that $V_\epsilon$ is not symplectomorphic to a convex set for $\epsilon$ sufficiently small.
\end{rmk}

\begin{rmk}
The family $V_\epsilon$ can be generalized to higher dimensions, see Figure \ref{fig:veps}(b). Using the same argument as in the proof of Theorem \ref{thm:veps}, we can show that if $\epsilon<n!/(2n-1)^n$, then $c_n^{\min}(V_\epsilon)<c_n^{\max}(V_\epsilon)$. However, we do not know what happens when $\epsilon\ge n!/(2n-1)^n$ since the techniques used in this case do not generalize to higher dimensions.
\end{rmk}

\section{Proof of Theorem \ref{thm:hdim}}\label{sec:knorm}
We start by the case $k=2$. It follows from the proof of Theorem \ref{thm:dim2} that we just need to show that in $\C^{n}$
\begin{equation}\label{eq:embed}
	\op{int}E(1,2,\ldots,2)\hookrightarrow P(1,\ldots,1)
\end{equation}
We shall proceed by induction on the dimension and use twice the family method from \cite[Section 2.1]{BH}. For ease of readability, we shall add the complex dimension of the ellipsoid in subscript.  By \cite[Theorem 1.3]{FM} the embedding \eqref{eq:embed} exist for $n=2$. We regard $E_n(1,2,\ldots,2)$ as a fibration over $E_n(1,2,\ldots,2)\cap\{z_n-plane\}$ where the fiber over a point $(0,\ldots,0,z_n)$ with $\pi|z_n|^2=r$ is an ellipsoid $(1-\frac{r}{2})E_{n-1}(1,2,\ldots,2)\subset\C^{n-1}$. By induction, this fiber embeds in $(1-\frac{r}{2})P_{n-1}(1,\ldots,1)\subset P_{n-1}(1, 1-\frac{r}{2},\ldots,1-\frac{r}{2})$.
We change the viewpoint and see it as a fibration over the $z_1$-plane. The family method then says we can do this simultaneously for all fibers and get an embedding
\[
	E_n(1,2,\ldots,2)\hookrightarrow D(1)\times E_{n-1}(1,2,\ldots,2)	
\]
We now use the family method a second time, together with our induction assumption to obtain an embedding
\[
	E_n(1,2,\ldots,2)\hookrightarrow D(1)\times P_{n-1}(1,\ldots,1)=P_n(1,\ldots,1).
\] 

For the second part of Theorem \ref{thm:hdim}, we fix $k,n\ge 2$ such that $k\ge \max(n,3)$. We shall prove for the polydisk $P(1,\dots,1)\subset\C^n$, we have $c_k^{\rm min}(P(1,\dots,1))<c_k^{\rm max}(P(1,\dots,1))$.

We have by \cite[Theorem 1.12]{GuH}
\[
	c_k^{CH}(P(1,\dots,1))=k.
\]
We suppose that $c_k^{\rm min}(P(1,\dots,1))=k$. Let $0<\epsilon<1$. Then there exist $0<a_1\le\dots\le a_n$ such that 
\begin{eqnarray}
E(a_1,\dots,a_n)\hookrightarrow& P(1,\dots,1),\label{eq:emb}\\
N_k(a_1,\dots,a_n)\ge& k-\epsilon.\label{eq:ineq1}
\end{eqnarray}

We first suppose that $a_2\le (k-1) a_1$. Under this condition, it follows from the definition of $N_k$ that
\begin{equation}\label{eq:ineq2}
N_k(a_1,\dots,a_n)\le (k-1)a_1.
\end{equation}
From \eqref{eq:emb} we obtain $a_1=c_1^{\rm CH}(E(a_1,\dots,a_n))\le c_1^{\rm CH}(P(1,\dots,1))=1$. So it follows from \eqref{eq:ineq2} that
\[N_k(a_1,\dots,a_n)\le k-1,\]
contradicting \eqref{eq:ineq1}.

So we may assume that $a_2> (k-1) a_1$. Under this condition, the definition of $N_k$ implies that
\begin{equation}\label{eq:ineq3}
N_k(a_1,\dots,a_n)\le \min(k a_1,a_2).
\end{equation}
Combining \eqref{eq:ineq1} and \eqref{eq:ineq3}, we obtain
\begin{equation}\label{eq:ineq4}
a_1\ge \frac{k-\epsilon}{k},\quad a_i\ge k-\epsilon\text{ for }i\ge 2.
\end{equation}
Since a symplectic embedding is volume preserving, it follows from \eqref{eq:emb} and \eqref{eq:ineq4} that
 \begin{equation}\frac{(k-\epsilon)^{n}}{k\cdot n!}\le \frac{a_1\dots a_n}{n!}=\op{vol}(E(a_1,\dots,a_n))\le \op{vol}(P(1,\dots,1))=1.\end{equation}
Taking the limit as $\epsilon\to 0$ in \eqref{eq:ineq4}, we obtain
\begin{equation}\label{eq:lim}
\frac{k^n}{k\cdot n!}\le 1.
\end{equation}
But \eqref{eq:lim} does not hold under our assumptions. In fact, it is obvious that \eqref{eq:lim} is false for $k=3$ and $n=2$. Now if $k\ge n\ge 3$, then
\[\frac{k^n}{k\cdot n!}=\frac{k^{n-1}}{n!}\ge \frac{n^{n-2}}{(n-1)!}\ge \frac{n}{n-1}>1,\]
contradicting \eqref{eq:lim}. We conclude that our original assumption is false, and hence \[c_k^{\rm min}(P(1,\dots,1))<k=c_k^{\rm GH}(P(1,\dots,1))\le c_k^{\rm max}(P(1,\dots,1)).\]

\begin{rmk}
The conditions on $k$ and $n$ for which the conclusion of Theorem \ref{thm:hdim} holds can be relaxed. In particular, it holds for $1\le k
< n$ for which \eqref{eq:lim} is false.
\end{rmk}

%

\bibliographystyle{alpha}
\bibliography{bibliography.bib}

\end{document}